\documentclass[a4paper, 10pt]{article}

\usepackage{packageList}
\usepackage{macros}
\usepgfplotslibrary{fillbetween}

\graphicspath{{./Figures/}}
\pgfplotsset{compat=1.11}
\newlength\fwidth

\theoremstyle{remark}
\newtheorem{remark}[theorem]{Remark}

\title{Reliable sampling-based RKHS norm estimation via superconvergence}
\author[1,2]{Tizian Wenzel \thanks{\href{mailto:wenzel@math.lmu.de}{wenzel@math.lmu.de}}}
\author[3]{Abdullah Tokmak \thanks{\href{mailto:abdullah.tokmak@aalto.fi}{abdullah.tokmak@aalto.fi}}}
\author[2,4]{Christian Fiedler \thanks{\href{mailto:christian.fiedler@tum.de}{christian.fiedler@tum.de}}}

\affil[1]{Ludwig Maximilian University of Munich (Munich, Germany)}
\affil[2]{Munich Center for Machine Learning (Munich, Germany)}
\affil[3]{Department of Electrical Engineering and Automation, Aalto University (Espoo, Finland)}
\affil[4]{Department of Mathematics, Technical University of Munich (Munich, Germany)}

\newcommand{\fakepar}[1]{\vspace{1mm}\noindent\emph{#1.}}

\begin{document}

\maketitle

\begin{abstract}
Kernel methods are one of the cornerstones of learning-based control, modern system identification, surrogate modelling, and related fields.
A key advantage of this class of learning and function approximation methods is the availability of quantitative error bounds, which in turn play a key role in guaranteeing the safety of learned controllers and related learning-based algorithms.
However, these error bounds rely on a particular property of the target function---its reproducing kernel Hilbert space (RKHS) norm---which is usually impossible to obtain in practice.
Motivated by this severe shortcoming, we present a novel sampling-based RKHS norm estimation approach with a solid theoretical foundation, leveraging very recent advances in the theory of superconvergence in kernel methods.
Our method is applicable to a broad range of practically relevant function classes and requires only reasonable prior knowledge about the target function.
Extensive numerical experiments demonstrate the efficacy and practical applicability of the proposed method.
By providing a reliable RKHS norm estimation approach, we remove a major obstacle to the practical deployment of learning-based control algorithms.
\end{abstract}

\section{Introduction}
Kernel methods are a popular and versatile class of learning \cite{steinwart2008support} and approximation approaches \cite{fasshauer2015kernel}, 
with numerous applications in learning-based control \cite{hewing2020learning}, 
system identification \cite{pillonetto2014kernel}, 
surrogate modelling \cite{gramacy2020surrogates}, and related fields.
These algorithms come with efficient and reliable numerical algorithms,
the ability to systematically include prior knowledge,
and an extensive and well-understood theory \cite{steinwart2008support,wendland2004scattered,fasshauer2015kernel},
including quantitative error bounds.
It is the latter aspect that makes kernel methods particularly attractive for safety-critical applications like learning-based control \cite{hewing2020learning},
approximate model predictive control \cite{tokmak2025automatic}, or safe Bayesian optimisation (BO) \cite{sui2015safe}.
For example, in learning-based control with safety guarantees, typically a machine learning algorithm is used that can quantify the remaining uncertainty about the underlying system, which in turn is then taken care of by a robust control method.
Similarly, quantitative error bounds are used in safe BO to ensure that the iterative optimisation process only queries safe inputs with high probability.
Suitable error bounds are available for kernel methods, and most importantly, these can be numerically evaluated \cite{fiedler2021learning,fasshauer2015kernel}.
More precisely, in applications like learning-based control, it is primarily frequentist and worst-case uncertainty bounds that are relevant, cf. e.g. the discussion in \cite{fiedler2021learning},
and such bounds are available inter alia for noise-free kernel interpolation as well as kernel ridge regression and Gaussian process regression.
However, to numerically evaluate these bounds, a concrete bound on the reproducing kernel Hilbert space (RKHS) norm of the target function is required,
and in practice this is rarely available.
In turn, this poses a severe obstacle to the practical availability of methods utilising such bounds, since in applications like learning-based control and safe BO all safety guarantees hinge on quantitative uncertainty bounds.
In fact, since in general no upper bound on the RKHS norm of the target function can be derived from reasonable prior knowledge, heuristics are used in practice that invalidate all safety guarantees, cf. the extensive discussion in \cite{fiedler2024safety}.

The relevance and significance of this issue is by now well-known in learning-based control, though no general solution exists so far.
One way to overcome this problem is to avoid uncertainty bounds relying on the RKHS norm altogether.
For example, one can combine kernel methods with bounds based on geometric properties like Lipschitz continuity \cite{fiedler2022learning,fiedler2024safety}, however, this has to be done on a case-by-case basis and one loses a lot of the available theory for kernel methods.
Another approach is to work in a probabilistic setting, where the underlying function is assumed to be random, see e.g.\ \cite{lederer2019uniform,tokmak2024pacsbo, tokmak2025safe}.
For many applications in learning-based control, this is not suitable either, since a frequentist framework with a fixed, but unknown ground truth function is often more appropriate than a probabilistic setup with a random target function.
Finally, a more recent direction is to try to estimate the RKHS norm of a target function from samples.
While there have been various proposals how to do this, see the references below, to the best of our knowledge there does not exist a reliable method with a solid theoretical foundation yet.
In this work, we provide a significant step forward on this problem. %

We consider the setting of having noise-free and safe access to the target function, a setup that appears, for example, in approximate (robust) model predictive control \cite{tokmak2025automatic}, and we want to estimate the RKHS norm of this target function from samples. %
As is well-known, in general it is only possible to get an \emph{underestimation} of the true RKHS norm from finite data, cf.\ e.g.\ \cite[Section~A]{scharnhorst2022robust}.
More precisely, the standard kernel interpolator is a minimum norm interpolator, and hence its computable norm is a lower bound on the norm of the target function.
To get an estimate of the RKHS norm or at least an upper bound thereof, heuristics have been proposed that utilise the fact that the norm of the minimum norm kernel interpolator increases with an increasing number of data points.
To the best of our knowledge, \cite{hashimoto2022learning} was the first work proposing to use this to get an RKHS norm estimate, albeit without a systematic method to perform this estimate.
Subsequently, \cite{tokmak2025automatic} introduced a simple, quantitative heuristic based on a postulated exponential increase of the norm of the interpolator.
In contrast, we present the first method with a rigorous theoretical foundation,
building on recent advances in kernel approximation theory.
We use the fact that for certain common kernels, the smoothness of a target function is exactly characterised by the convergence rate of the kernel approximation error, which follows from very recent sharp direct and inverse results \cite{wenzel2026sharp}.
In turn, from the observed convergence behaviour of the approximation error we can then estimate the RKHS norm or even an upper bound thereof.
For this, we only need to assume sufficient smoothness of the target function, without even knowing the exact quantitative smoothness.

\emph{Outline}. In Section \ref{sec:background}, we provide technical background on RKHSs and kernel interpolation.
We derive and describe our method in Section \ref{sec:rkhs_norm_estimation}, focusing on finitely smooth  Sobolev kernels. %
We evaluate our method with extensive numerical experiments in Section \ref{sec:experiments},
and we conclude with Section \ref{sec:conclusion}.

\section{Background} \label{sec:background}
We start with some background on kernels and kernel interpolation, and refer to \cite{wendland2004scattered,steinwart2008support} for more details.

\subsection{Kernels, RKHSs and interpolation}
\fakepar{Kernels and RKHSs}
We will consider strictly positive definite kernels $k$ on bounded Lipschitz domains $\Omega \subset \R^d$,
i.e., symmetric functions $k: \Omega \times \Omega \rightarrow \R$ 
such that the kernel matrix $A_X := (k(x_i, x_j))_{1 \leq i, j \leq |X|} \in \R^{|X| \times |X|}$ 
is positive definite for any choice of pairwise distinct points $X \subset \Omega$.
For such a kernel $k$, there is a unique Hilbert space of functions $\ns$ for which $k$ is a reproducing kernel,
i.e., $\langle f, k(\cdot, x) \rangle_{\ns} = f(x)$ for all $f \in \ns$ and $x \in \Omega$,
and we call $\ns$ a reproducing kernel Hilbert space (RKHS).
As a consequence of the reproducing property $k$, the RKHS norm of a function of the form $g(x)=\sum_{i=1}^n \alpha_i k(x,x_i)$, where $\alpha_1,\ldots,\alpha_n \in \R$ and $x_1,\ldots,x_n\in\Omega$, can be computed in closed form as
\begin{align}
\label{eq:rkhs_norm_finite}
    \|g\|_{\ns} = \sqrt{\sum_{i,j=1}^n \alpha_i \alpha_j k(x_j,x_i)}.
\end{align}
Note that a general $f\in \ns$ does not have this form and hence its RKHS norm cannot be computed in this way.

\fakepar{Kernel interpolation} 
Due to the strict positive definiteness of $k$,
for any function $f: \Omega\rightarrow\R$ and any non-empty finite subset $X\subseteq \Omega$,
there exists a unique interpolating function $s_{f,X}$ from $\ns$ with minimum norm,
i.e., $s_{f,X}(x)=f(x)$ for all $x\in X$, and among all functions from $\ns$ with this property, $s_{f,X}$ has the minimum norm.
In addition, it holds that $s_{f,X} \in \Sp \{ k(\cdot, x), x \in X \}$,
and if $f \in \ns$,
then the interpolant $s_{f, X}$ is in fact the orthogonal projection of $f$ onto the subspace $\Sp \{ k(\cdot, x), x \in X \}$.
Furthermore, we have the important property
\begin{align}
\label{eq:boundedness_rkhs_norms}
f \in \ns ~ \Leftrightarrow ~ \exists C > 0 ~ \forall X \subset \Omega ~ \Vert s_{f, X} \Vert_{\ns} \leq C,
\end{align}
and in fact one can choose $C = \Vert f \Vert_{\ns}$.
Thus, for $f \in \ns$ and the fact that $s_{f,X}$ is then an orthogonal projection, 
one has $\Vert s_{f, X} \Vert_{\ns} \leq \Vert f \Vert_{\ns}$,
which always provides a lower bound for the RKHS norm of $f$. 
The goal of this work is to derive a method to reliably estimate also an upper bound for the RKHS norm of $f$.

\fakepar{Error bounds} An upper bound on the RKHS norm is important since it allows to derive quantitative error bounds.
Focusing on the interpolation setting introduced above with $f\in\ns$, we have for all $x\in \Omega$ that
\begin{align} \label{eq:errorBoundWithPowerFunction}
    |f(x) - s_{f,X}(x)| & \leq P_X(x)\sqrt{\|f\|_{\ns}^2 - \|s_{f,X}\|_{\ns}^2} \notag \\
        & \leq P_X(x)\|f\|_{\ns},
\end{align}
where $P_X(x)$ is the power function, which depends only on $X$, $k$, and $x$, and can be explicitly computed \cite{fasshauer2015kernel}.
In particular, this allows one to get error bounds everywhere on the input domain despite having only finitely many data points,
and these bounds are in general sharp, cf. Figure \ref{fig:vis_power_func} for an illustration.
Alternatively, one can also derive error bounds that depend on geometric properties of the data set $X$,
in particular, the fill distance $h_X := \sup_{x \in \Omega} \min_{x_i \in X} \Vert x - x_i \Vert$
and the separation distance $q_X := \min_{x_i \neq x_j \in X} \Vert x_i - x_j \Vert$,
as well as the uniformity constant $\rho_X := h_X/q_X \geq 1$ describing how uniform the points $X$ are distributed.
Furthermore, these quantities can be used to describe the convergence of $s_{f,X}$ to $f$ in various norms.
\emph{Direct results} provide upper bounds on the approximation errors,
whereas \emph{inverse results} allow to infer the level of smoothness of the target function from the convergence of the approximation error.
For many common kernels there are now \emph{sharp direct and inverse results} available, 
which in turn describe the smoothness of a target function precisely through the convergence of the approximation error,
cf. Theorem \ref{th:one_to_one_rkhs} for such a result.

\fakepar{Power spaces} In order to obtain sharp error estimates \cite{wenzel2026sharp},
it is necessary to take into account the exact smoothness of the target function $f$ of interest,
measured in terms of so-called power space smoothness \cite{steinwart2012mercer}.
For this, assume from now on that $k$ is continuous, and define the kernel integral operator
\begin{align*}
T_k: L_2(\Omega) \rightarrow L_2(\Omega), ~
f \mapsto \int_\Omega k(x, z)f(z) ~ \mathrm{d}z,
\end{align*}
which is a compact self-adjoint operator.
Mercer's theorem yields an orthonormal basis of eigenfunctions $(\varphi_j)_{j \in \N} \subset L_2(\Omega)$ of $T_k$ with corresponding ordered eigenvalues $(\lambda_j)_{j \in \N} \subset \R_+$.
These Mercer eigenfunctions and -values can be used to define power spaces for $\vartheta \geq 0$ as
\begin{align}
\label{eq:power_space}
(\mathcal{H}_{k}(\Omega))_\vartheta := \left\{ f \in L_2(\Omega) ~:~ \sum_{j=1}^\infty \frac{|\langle f, \varphi_{j} 
\rangle_{L_2(\Omega)}|^2}{\lambda_{j}^\vartheta} < \infty \right\}.
\end{align}
The regime $\vartheta \in [0, 1]$ is usually called the \emph{escaping the native space}\footnote{In approximation theory, an RKHS is also called a \emph{native space}. The regime $\vartheta\in[0,1]$ means that the target function is considered outside of the RKHS, hence the terminology.} or the \emph{misspecified} regime \cite{narcowich2006sobolev},
while the regime $\vartheta \in [1, 2]$ is called the \emph{superconvergence} regime \cite{schaback2018superconvergence,wenzel2026sharp}.
The latter terminology is motivated by the fact that additional smoothness (over the smoothness of the RKHS) leads to faster convergence rates.
Note that for $\vartheta > 2$, a saturation effect occurs which limits the convergence rates \cite{wenzel2026optimal}.
\subsection{Finitely smooth kernels} \label{subsec:background_finitely}
First, we focus on finitely smooth Sobolev kernels, i.e., kernels with RKHSs that are norm-equivalent to the standard Sobolev space of smoothness $\tau > d/2$, so $\ns \asymp H^\tau(\Omega)$.
\begin{assumption}
\label{ass:kernel_domain}
Let $\Omega \subset \R^d$ be a compact Lipschitz region and let $k: \Omega \times \Omega \rightarrow \R$ be a continuous kernel such that $\ns \asymp 
H^\tau(\Omega)$ for some $\tau > d/2$. 
\end{assumption}
For such kernels, the power space $(\ns)_\vartheta$ can be related to the Sobolev space $H^{\vartheta \tau}(\Omega)$,
and an overview of the equivalence and subset relations is visualized in \Cref{fig:visualization}.
For such finitely smooth kernels,
recently a one-to-one correspondence between smoothness (measured in terms of the power spaces) and approximation rate of kernel methods has been established \cite{wenzel2026sharp,wenzel2026optimal}:
A function $f \in \mathcal{C}(\Omega)$ has power space smoothness $\vartheta_0 \in (0, 2]$ (meaning that $f \in (\ns)_{\vartheta'}$ for all $\vartheta' < \vartheta_0$ and there no larger $\vartheta_0' > \vartheta_0$ such that this holds),
if and only if $f$ can be approximated with a rate of $\vartheta_0 \tau$ (meaning that for all $\vartheta' < \vartheta_0$ there exists a constant $C$ such that $\Vert f - s_{f, X} \Vert_{L_2(\Omega)} < C h_X^{\vartheta' \tau}$ for quasi-uniform $X \subset \Omega$ and there exists no larger $\vartheta_0' > \vartheta_0$ such that this holds).
These results were formulated using $L_2(\Omega)$ norms, and in \Cref{subsec:rkhs_finitely_smooth} we will extend them to RKHS norms,
which can then be used to estimate the RKHS norm of a function.
\subsection{Infinitely smooth kernels} \label{subsec:background_gaussian}

Beyond finitely smooth kernels like Matérn kernels, infinitely smooth kernels  are also frequently used in applications.
A very popular example is the Gaussian kernel $k(x, z) = \exp(-\varepsilon^2 \Vert x - z \Vert^2)$,
where $\varepsilon > 0$ denotes a shape parameter.
In this work, we do not consider the Gaussian kernel, primarily for two reasons.

First, the RKHS of this kernel is rather small \cite{steinwart2006explicit}, which is problematic from a practical point of view.
Furthermore, since the corresponding power spaces are also rather small, it is not easy to decide when a given function is included in such a power space in the first place.
Second, the approximation-theoretic properties of sampling-based approximation using the Gaussian kernel are not as well developed:
While available direct statements usually provide exponential rates of convergence \cite{rieger2010sampling},
and general superconvergence theory is still applicable \cite{karvonen2025general},
so far no sharp inverse estimates have been derived yet \cite{wenzel2025sharp}.
This might be due to a mismatch between these convergence rates and the corresponding stability estimates, see \cite[Theorem 11.22]{wendland2004scattered} vs.\ \cite[Corollary 12.4]{wendland2004scattered}.
One important aspect in this context is the observation that the optimal point distribution for interpolation does not seem to be uniform anymore, since oversampling near the boundary of the inputs domain appears to be beneficial \cite{rieger2014improved}.

\begin{figure}[t]
\setlength\fwidth{.8\textwidth}
\begin{center}
\begin{tikzpicture}[>=latex, thick]

\begin{scope}[on background layer]
  \fill[blue!10, opacity=0.4]
    (.9,-2.7cm) rectangle (5.0cm,0.5cm);

  \fill[green!10, opacity=0.4]
    (5.0cm,-2.7cm) rectangle (9.1cm,0.5cm);
\end{scope}

\node[blue!60!black] at (3.25cm,0.8cm)
  {\small Escaping regime};

\node[green!60!black] at (6.75cm,0.8cm)
  {\small Superconvergence regime};

\draw[->] (.9,0) -- (9.1cm,0) node [right] {$\R$};

\draw (1.5,-3pt) -- (1.5,3pt) node[above=1pt]{0};
\draw (1.5,-2pt) node[below=0pt, align=center, fill=lightgray, rounded corners, inner sep=2pt]{$L_2(\Omega)$};

\draw (3.3,-3pt) -- (3.3,3pt) node[above=0pt]{\scriptsize $\vartheta$};
\draw (3.3, -2pt) node[below=0pt, align=center, fill=lightgray, rounded corners, inner sep=2pt]{\scriptsize $(\mathcal{H}_{k}(\Omega))_{\vartheta}$};

\draw (5.0,-2pt) -- (5.0,2pt) node[above=1pt]{1};
\draw (5.0, -2pt) node[below=0pt, align=center, fill=lightgray, rounded corners, inner sep=2pt]{$\mathcal{H}_{k}(\Omega)$};

\draw (6.5,-3pt) -- (6.5,3pt) node[above=0pt]{\scriptsize $\tilde{\vartheta}$};
\draw (6.5, -2pt) node[below=0pt, align=center, fill=lightgray, rounded corners, inner sep=2pt]{\scriptsize $(\mathcal{H}_{k}(\Omega))_{\tilde{\vartheta}}$};

\draw (8.5,-3pt) -- (8.5,3pt) node[above=1pt]{2};
\draw (8.5, -2pt) node[below=0pt, align=center, fill=lightgray, rounded corners, inner sep=2pt]{$T_{k}L_2(\Omega)$}; 

\draw[->] (.9,-2cm) -- (9.1cm,-2cm) node [right] {$\R$};

\draw (1.5,-2cm+3pt) -- (1.5,-2cm-3pt) node[below=1pt]{0};
\draw (1.5,-2cm+2pt) node[above=0pt, align=center, fill=lightgray, rounded corners, inner sep=2pt]{$L_2(\Omega)$};

\draw (3.3,-2cm+3pt) -- (3.3,-2cm-3pt) node[below=0pt]{\scriptsize $\vartheta \tau$};
\draw (3.3, -2cm+2pt) node[above=0pt, align=center, fill=lightgray, rounded corners, inner sep=2pt]{\scriptsize $H^{\vartheta \tau}(\Omega)$};

\draw (5.0,-2cm+3pt) -- (5.0,-2cm-3pt) node[below=1pt]{$\tau$};
\draw (5.0, -2cm+2pt) node[above=0pt, align=center, fill=lightgray, rounded corners, inner sep=2pt]{$H^\tau(\Omega)$};

\draw (6.5,-2cm+3pt) -- (6.5,-2cm-3pt) node[below=0pt]{\scriptsize $\tilde{\vartheta}\tau$};
\draw (6.5, -2cm+2pt) node[above=0pt, align=center, fill=lightgray, rounded corners, inner sep=2pt]{\scriptsize $H^{\tilde{\vartheta} \tau}(\Omega)$};

\draw (8.5,-2cm+3pt) -- (8.5,-2cm-3pt) node[below=1pt]{$2\tau$};
\draw (8.5, -2cm+2pt) node[above=0pt, align=center, fill=lightgray, rounded corners, inner sep=2pt]{$H^{2\tau}(\Omega)$};

\draw (1.5, -1cm) node[align=center]{\rotatebox[origin=c]{270}{$=$}};
\draw (3.3, -1cm) node[align=center]{\rotatebox[origin=c]{270}{$\asymp$}};
\draw (5.0, -1cm) node[align=center]{\rotatebox[origin=c]{270}{$\asymp$}};
\draw (6.5, -1cm) node[align=center]{\rotatebox[origin=c]{270}{$\subseteq$}};
\draw (8.5, -1cm) node[align=center]{\rotatebox[origin=c]{270}{$\subseteq$}};

\end{tikzpicture}
\end{center}
\caption{Visualization of the scale of power spaces (top arrow) and Sobolev spaces (bottom arrow).
Several special cases like $L_2(\Omega)$, $\ns \asymp H^{\tau}(\Omega)$ and $T_k L_2(\Omega) \subset H^{2\tau}(\Omega)$ are depicted,
as well as the equivalence and subset relations.
}
\label{fig:visualization}
\end{figure}
\section{RKHS norm estimates via superconvergence} \label{sec:rkhs_norm_estimation}
We now present the proposed approach to estimate RKHS norms.
In \Cref{subsec:rkhs_finitely_smooth}, %
we develop the necessary theory for finitely smooth kernels,
which allows us to derive two algorithms in \Cref{subsec:alg} for estimating the RKHS norm and an upper bound on the RKHS norm, respectively.
\subsection{Theoretical foundations} \label{subsec:rkhs_finitely_smooth}
We consider a strictly positive definite kernel $k$ such that $\ns \asymp H^\tau(\Omega)$, cf. Assumption \ref{ass:kernel_domain},
which covers plenty of frequently used kernels in practice.
The following theorem extends the one-to-one correspondence of \cite{wenzel2026sharp,wenzel2026optimal} from the $L_2(\Omega)$-norm to the $\ns$-norm.
\begin{theorem}
\label{th:one_to_one_rkhs}
Under Assumption \ref{ass:kernel_domain}, it holds for $\vartheta \in (1, 2]$:
\begin{align*}
&~ \forall \vartheta' < \vartheta ~ \quad f \in (\ns)_{\vartheta'}  \quad \Leftrightarrow \\
~&~ \forall \vartheta' < \vartheta ~ \exists C>0 ~ \forall X \subset \Omega: ~ \Vert f - s_{f, X} \Vert_{\ns} \leq C h_X^{(\vartheta'-1)\tau}
\end{align*}
\end{theorem}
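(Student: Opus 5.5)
We prove the two implications separately. The forward direction comes from the superconvergence identity combined with the $L_2$ direct estimate. The reverse direction comes from an inverse argument via a telescoping series, fed by the $L_2$ inverse result of \cite{wenzel2026sharp,wenzel2026optimal}. Throughout we may assume $X$ is quasi-uniform, as in the cited $L_2$ one-to-one correspondence. For the reverse direction the hypothesis for all $X$ in particular covers quasi-uniform sequences.

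\emph{Direction ``$\Rightarrow$''.} Fix $\vartheta' < \vartheta$ and pick $\vartheta''\in(\max(\vartheta',1),\vartheta)$, so that $f\in(\ns)_{\vartheta''}\subset\ns$. Since $s_{f,X}$ is the orthogonal projection of $f$ and $f-s_{f,X}$ vanishes on $X$, we have
\begin{align*}
\Vert f - s_{f,X}\Vert_{\ns}^2 = \langle f, f-s_{f,X}\rangle_{\ns}.
\end{align*}
For $\vartheta''>1$ the function $f$ lies in the range of $T_k^{(\vartheta''-1)/2}$ viewed as an operator on $\ns$. Write $f = T_k^{(\vartheta''-1)/2} g$ with $g$ in a suitable power space. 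We then move the fractional operator to the other side and interpolate between the $L_2$ and $\ns$ norms of the error:
\begin{align*}
\Vert f-s_{f,X}\Vert_{\ns}^2 \lesssim \Vert g\Vert \, \Vert f-s_{f,X}\Vert_{(\ns)_{2-\vartheta''}}.
\end{align*}
Here $(\ns)_{2-\vartheta''}$ interpolates between $L_2$ (at $\vartheta=0$) and $\ns$ (at $\vartheta=1$). Alternatively, one can work directly in Sobolev scales, using $(\ns)_{\theta}\asymp H^{\theta\tau}$ for $\theta\le1$. A Sobolev sampling inequality on quasi-uniform sets, applied to the error which vanishes on $X$, gives
\begin{align*}
\Vert e\Vert_{H^{(2-\vartheta'')\tau}} \lesssim h_X^{(\vartheta''-1)\tau}\Vert e\Vert_{H^\tau}, \qquad e := f-s_{f,X}.
\end{align*}
Dividing by $\Vert e\Vert_{\ns}$ yields $\Vert e\Vert_{\ns}\lesssim h_X^{(\vartheta''-1)\tau}$, which is stronger than the claimed bound. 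For general $X$ with $h_X$ small, the sampling inequality needs no quasi-uniformity. For large $h_X$ the bound is trivial, since $\Vert e\Vert_{\ns}\le\Vert f\Vert_{\ns}$.

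\emph{Direction ``$\Leftarrow$''.} Take a nested quasi-uniform sequence $X_n$ with $h_{X_n}\asymp 2^{-n}$. The hypothesis gives $\Vert f-s_{f,X_n}\Vert_{\ns}\le C 2^{-n(\vartheta'-1)\tau}$. This implies in particular that $f\in\ns$, since the interpolant norms are bounded and \eqref{eq:boundedness_rkhs_norms} applies. Combining with the $L_2$ sampling inequality for functions vanishing on $X_n$, we obtain
\begin{align*}
\Vert f-s_{f,X_n}\Vert_{L_2}\lesssim h_{X_n}^{\tau}\Vert f-s_{f,X_n}\Vert_{\ns}\lesssim h_{X_n}^{\vartheta'\tau}.
\end{align*}
This is exactly the $L_2$ rate $\vartheta'\tau$ for every $\vartheta'<\vartheta$. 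The $L_2$ inverse result of \cite{wenzel2026sharp,wenzel2026optimal}, stated in \Cref{subsec:background_finitely} for $\vartheta\in(0,2]$, then gives $f\in(\ns)_{\vartheta'}$ for all $\vartheta'<\vartheta$.

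\emph{Main obstacle.} The delicate point is the forward direction. We must justify the duality and interpolation step relating $\Vert e\Vert_{\ns}^2$ to a weaker norm of $e$ via the power space representation of $f$. The relevant scale is $(\ns)_{2-\vartheta''}\asymp H^{(2-\vartheta'')\tau}$, and this norm equivalence must hold on the bounded domain $\Omega$. We also need a sampling inequality in that fractional Sobolev norm for functions vanishing on $X$. If this route is troublesome, the fallback is to take the $L_2$ superconvergence rate $h^{\vartheta''\tau}$ from \cite{wenzel2026sharp}. We would then interpolate it against the stability bound $\Vert e\Vert_{\ns}^2=\langle f,e\rangle_{\ns}$. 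Writing $\langle f,e\rangle_{\ns}=\langle T_k^{-1}f,e\rangle_{L_2}$ works cleanly at $\vartheta''=2$ and requires fractional interpolation otherwise.
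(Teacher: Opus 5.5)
Your proof is correct, but the forward direction takes a different route from the paper's.

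\emph{Direction ``$\Leftarrow$''.} This is exactly the paper's argument. The $L_2$ sampling inequality turns the RKHS rate $h_X^{(\vartheta'-1)\tau}$ into the $L_2$ rate $h_X^{\vartheta'\tau}$, and the $L_2$ inverse theorem of \cite{wenzel2026sharp} then gives the power space smoothness.

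\emph{Direction ``$\Rightarrow$''.} The paper simply invokes \cite[Corollary 16]{karvonen2025general}. You instead reprove the needed superconvergence estimate by the classical duality argument: $\Vert e\Vert_{\ns}^2=\langle f,e\rangle_{\ns}\leq \Vert f\Vert_{(\ns)_{\vartheta''}}\Vert e\Vert_{(\ns)_{2-\vartheta''}}$, obtained by Cauchy--Schwarz on the Mercer coefficients, and then bound the weaker norm of $e$ by a sampling argument. This makes the proof self-contained. It also shows directly that the bound holds for arbitrary, not necessarily quasi-uniform, $X$: use $h_X\leq h_0$ for the sampling inequality and $\Vert e\Vert_{\ns}\leq\Vert f\Vert_{\ns}$ otherwise. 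The paper's citation is shorter but hides this mechanism.

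\emph{Your ``main obstacle''.} This step is easier than you fear. You need neither a fractional Sobolev sampling inequality nor the equivalence $(\ns)_\theta\asymp H^{\theta\tau}$, and the fallback is unnecessary. Hölder's inequality on the Mercer coefficients, together with Bessel's inequality, gives $\Vert e\Vert_{(\ns)_\theta}\leq \Vert e\Vert_{L_2(\Omega)}^{1-\theta}\Vert e\Vert_{\ns}^{\theta}$ for $\theta\in[0,1]$. Combining this with the plain $L_2$ sampling inequality $\Vert e\Vert_{L_2(\Omega)}\leq C'h_X^{\tau}\Vert e\Vert_{\ns}$ yields $\Vert e\Vert_{(\ns)_{2-\vartheta''}}\leq (C')^{\vartheta''-1}h_X^{(\vartheta''-1)\tau}\Vert e\Vert_{\ns}$ immediately.

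\emph{Minor point.} In ``$\Leftarrow$'' you do not need \eqref{eq:boundedness_rkhs_norms} to conclude $f\in\ns$. The hypothesis already presupposes it, since otherwise $\Vert f-s_{f,X}\Vert_{\ns}$ is not defined.
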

\begin{proof}
``$\Rightarrow$'' This follows immediately from \cite[Corollary 16]{karvonen2025general}.
``$\Leftarrow$'' 
Sampling inequalities \cite{narcowich2006sobolev} yield $\Vert f - s_{f, X} \Vert_{L_2(\Omega)} \leq C' h_X^\tau \Vert f - s_{f, X} \Vert_{\ns}$,
such that we obtain the estimate $\Vert f - s_{f, X} \Vert_{L_2(\Omega)} \leq CC' h_X^{\vartheta'\tau}$.
Thus, \cite[Theorem 1]{wenzel2026sharp} is applicable and yields the statement.
\qed
\end{proof}
\begin{remark}
\label{rem:power_spaces}
The power spaces $(\ns)_{\vartheta}$ for $\vartheta > 1$,
can be difficult to characterise.
There are some Sobolev kernels, for example Matérn kernels, that allow a description in terms of Sobolev smoothness $\vartheta \tau$ and additional boundary conditions,
see \cite[Section 6]{karvonen2025general} or \cite[Proposition 9]{karvonen2026piecewise} for an explicit example,
and finding such boundary conditions is in general very hard.
However, this phenomenon is in general not a problem for our approach.
These boundary conditions usually do not occur immediately as soon as $\vartheta > 1$, but only for some larger $\vartheta$,
and hence we still obtain some superconvergence.
In the literature, a superconvergence rate of $h^{1/2}$ was frequently observed \cite{schaback2018superconvergence} and examined for instance in \cite[Proposition 9]{karvonen2026piecewise},
even without matching any particular boundary conditions.
As a consequence, our methods are still applicable for such kernels, even without matching any boundary condition.
\end{remark}
Throughout this section we assume that $f \in (\ns)_\vartheta$ for some $\vartheta > 1$, so in particular, $f \in \ns$.
While in some applications it might be unclear whether this holds, 
we stress that $f \notin \ns$ leads to unbounded growth of the RKHS norms of the interpolants, cf.\ Eq.~\eqref{eq:boundedness_rkhs_norms},
and this can be easily detected.
We remark that this growth can even be characterized accurately, 
see \cite[Theorem 3.3,~3.6]{avesani2025sobolev}. %
\subsection{Algorithms} \label{subsec:alg}
Assume now $f \in (\ns)_\vartheta \subset \ns$ for some (potentially unknown) $\vartheta \in (1, 2]$.
The preceding result \Cref{th:one_to_one_rkhs} shows that the RKHS norm of the residual $f - s_{f, X}$ decays with an algebraic rate of convergence w.r.t.\ the fill distance $h_X$.
We leverage this theoretical result to propose two algorithms for estimating the RKHS norm of the unknown function $f$.
Both algorithms approximate the function $f$ using a sequence of increasingly denser sets of (finitely many) points,
computing and tracking the RKHS norms of the resulting interpolants.
In \Cref{fig:vis_alg}, the convergence behaviour of these interpolants is illustrated for the two algorithms described in the following. %
\subsubsection{Algorithm 1}
We first describe an algorithm that aims at estimating the RKHS norm of a given target function.
Using the well-known orthogonality property of kernel interpolation, i.e., $f - s_{f, X} \perp s_{f, X}$, we have that
\begin{align*}
    \Vert f \Vert_{\ns}^2 &= \Vert f - s_{f, X} \Vert_{\ns}^2 + \Vert s_{f, X} \Vert_{\ns}^2.
\end{align*}
The term $\Vert s_{f, X} \Vert_{\ns}^2$ can be computed from $s_{f,X}$,
see \eqref{eq:rkhs_norm_finite},
and we have a precise description of the decay behaviour of $\Vert f - s_{f, X} \Vert_{\ns}^2$, provided by \Cref{th:one_to_one_rkhs}.
Therefore, we propose to perform a least square fit of the model $c_1 - c_1' h^{\beta_1}$ (with parameters $c_1,c_1',\beta_1$) to the data points  $(h_{X_i}, \Vert s_{f, X_i} \Vert_{\ns}^2)_{i=0, ..., n_0}$, where $X_0,\ldots,X_{n_0}$ is a sequence of quasi-uniform points with decreasing fill distances.
Finally, $\sqrt{c_1}$ can be used as an estimate of $\Vert f\Vert_{\ns}$.
The resulting method is summarised in \Cref{alg:alg1}.
\begin{algorithm}
\caption{RKHS norm estimation}
\label{alg:alg1}
\begin{algorithmic}[1]
\Require Input points $(X_i)_{i=0, ..., n_0} \subset \Omega$, corresponding function values of $f$
\Ensure RKHS norm prediction
\For{$i=0, ..., n_0$}
    \State Compute and store $h_{X_i}$, $\Vert s_{f, X_i} \Vert_{\ns}^2$
\EndFor
\State Fit model $h \mapsto c_1 - c_1' h^{\beta_1}$ (determine $c_1,c_1',\beta_1$) to $(h_{X_i}, \Vert s_{f, X_i} \Vert_{\ns}^2)_{i=0, ..., n_0}$ \\
\Return RKHS estimate $\sqrt{c_1}$
\end{algorithmic}
\end{algorithm}
\subsubsection{Algorithm 2}
Second, we now derive an algorithm for estimating an upper bound on the RKHS norm of a target function.
Consider a nested sequence of quasi-uniform points $(X_n)_n$.
Using the triangle inequality and then \Cref{th:one_to_one_rkhs} leads to
\begin{align*}
    & \Vert s_{f, X_{n+1}} - s_{f, X_n} \Vert_{\ns} \\
        & \hspace{0.5cm} \leq \Vert f - s_{f, X_{n+1}} \Vert_{\ns} + \Vert f - s_{f, X_n} \Vert_{\ns} \\
        & \hspace{0.5cm} \leq C h_{X_{n+1}}^{(\vartheta'-1)\tau} + C h_{X_n}^{(\vartheta'-1)\tau} \\
        & \hspace{0.5cm} \leq 2C h_{X_n}^{(\vartheta'-1)\tau}
\end{align*}
Due to the nestedness of the point sets, we can repeatedly leverage the orthogonality property of kernel interpolation.
Together with the previous estimate, this results in 
\begin{align}
\label{eq:estimate_via_geom_sum}
    & \Vert f \Vert_{\ns}^2
    = \lim_{n \rightarrow \infty} \Vert s_{f, X_n} \Vert_{\ns}^2 \notag \\
    & \hspace{0.5cm} = \lim_{n \rightarrow \infty} \Vert s_{f, X_n} - s_{f, X_{n-1}} \Vert_{\ns}^2 + \Vert s_{f, X_{n-1}} \Vert_{\ns}^2 \notag \\
    & \hspace{0.5cm} = \lim_{n \rightarrow \infty} \sum_{j=1}^n \Vert s_{f, X_{j}} - s_{f, X_{j-1}} \Vert_{\ns}^2 + \Vert s_{f, X_0} \Vert_{\ns}^2 \notag \\
    & \hspace{0.5cm} \leq \lim_{n \rightarrow \infty} \sum_{j=1}^n 4C^2 h_{X_{j-1}}^{2(\vartheta'-1)\tau} + \Vert s_{f, X_0} \Vert_{\ns}^2.
\end{align}
Since the RKHS norms $\Vert s_{f, X_{j}} - s_{f, X_{j-1}} \Vert_{\ns}^2$ can be computed and $h_{X_j}$ is known,
we can perform a least-squares fit of the model $x \mapsto a + b x$ in log-space
(with $a \approx \log(2C)$ and $b \approx (\vartheta'-1)\tau$).
Due to the known smoothness $\tau$ of the kernel this leads immediately to an estimate of the power space smoothness $\vartheta$.
From a practical point of view, $X_0$ should be chosen dense enough so that $X_1, X_2, \ldots$ already allow an asymptotic behaviour of $\Vert s_{f, X_n} - s_{f, X_{n-1}} \Vert_{\ns}$.
Finally, requiring a geometric decay of the fill distance, i.e., $h_{X_n} \leq C_h \rho^n$ for $n\geq 0$,
leads to
{\small
\begin{align}
\label{eq:rkhs_upper_bound}
     \Vert f \Vert_{\ns}^2 & \leq \lim_{n \rightarrow \infty} 4C^2 C_h^{2(\vartheta'-1)\tau} \sum_{j=0}^n \rho^{2(\vartheta'-1)\tau j} + \Vert s_{f, X_0} \Vert_{\ns}^2 \notag \\
     & = \tilde{C}(1-\tilde{\rho})^{-1} + \Vert s_{f, X_0} \Vert_{\ns}^2
\end{align} }%
with $\tilde{C}=4C^2C_h^{2(\vartheta'-1)\tau}$ and $\tilde{\rho}=\rho^{2(\vartheta'-1)\tau}$,
which in turn can be combined with the fitted parameters from above to arrive at an estimated \emph{upper bound} of the RKHS norm of $f$.
The resulting method is described in \Cref{alg:alg2}.
\begin{algorithm}
\caption{RKHS norm upper bound estimation}
\label{alg:alg2}
\begin{algorithmic}[2]
\Require Nested input points $(X_i)_{i=0, ..., n_0} \subset \Omega$ with geometrically decaying fill distances, corresponding function values of $f$ 
\Ensure Estimate of upper bound on RKHS norm
\For{$i=0, ..., n_0$}
    \State Compute and store $h_{X_i}$, $\Vert s_{f, X_i} \Vert_{\ns}$
\EndFor
\State Fit model $h \mapsto c_2 h^{\beta_2}$ (determine $c_2, \beta_2$) to \\ $(h_{X_i}, \Vert s_{f, X_{i+1}} - s_{f, X_i} \Vert_{\ns})_{i=0, ..., n_0 - 1}$ \\
\Return RKHS upper bound from Eq.~\eqref{eq:rkhs_upper_bound}
\end{algorithmic}
\end{algorithm}

Note that the results describing the asymptotic convergence behaviour are rigorous and exact -- no heuristics are involved.
However, since our algorithms work on finitely many sampling points,
they are to some extent heuristic, 
though still theoretically grounded.
This situation is similar to normal approximation in statistics,
where the theoretical foundations are rigorous and exact (such as the central limit theorem),
but the practical applications are to some extent heuristic (using an asymptotic result in a non-asymptotic setting).

\begin{figure}[t]
\setlength\fwidth{.5\textwidth}
\hspace*{.52cm}\begin{tikzpicture}

\definecolor{darkgray176}{RGB}{176,176,176}
\definecolor{lightgray204}{RGB}{204,204,204}

\begin{axis}[
width=0.951\fwidth,
height=0.75\fwidth,
at={(0\fwidth,0\fwidth)},
legend cell align={left},
legend style={
  fill opacity=0.8,
  draw opacity=1,
  text opacity=1,
  at={(0.03,0.03)},
  anchor=south west,
  draw=lightgray204
},
log basis x={10},
log basis y={10},
tick align=outside,
tick pos=left,
x grid style={darkgray176},
xlabel={fill distance $h_X$},
xmin=0.008, xmax=0.2,
xmode=log,
xtick style={color=black},
y grid style={darkgray176},
ymin=3.6, ymax=4.6,
ytick style={color=black}
]
\addplot [semithick, red, mark=x, mark size=3, mark options={solid}, only marks]
table {%
0.166666666666667 3.82415899623798
0.125 3.95916375231754
0.0909090909090909 4.09264813972677
0.0645161290322581 4.20694408971636
0.0476190476190476 4.28321193822703
0.0344827586206897 4.34323517872238
0.0253164556962025 4.38512461664145
0.0185185185185185 4.41604047967603
0.0136054421768707 4.43824484773144
0.01 4.454441294808
};
\addlegendentry{\scriptsize $\Vert s_{f, X} \Vert_{\ns}^2$}
\addplot [semithick, blue]
table {%
0.00868775494941924 4.50387399563563
0.191840892885461 4.50387399563563
};
\addlegendentry{\scriptsize $\Vert f \Vert_{\ns}^2$ estimate} %
\addplot [semithick, blue, dashed]
table {%
0.166666666666667 3.76330399254343
0.165084175084175 3.77010990360685
0.163501683501683 3.77691791940795
0.161919191919192 3.78372806097898
0.1603367003367 3.79054034977002
0.158754208754209 3.79735480766146
0.157171717171717 3.80417145697705
0.155589225589226 3.81099032049742
0.154006734006734 3.81781142147418
0.152424242424242 3.82463478364458
0.150841750841751 3.83146043124685
0.149259259259259 3.83828838903607
0.147676767676768 3.84511868230084
0.146094276094276 3.85195133688062
0.144511784511785 3.8587863791838
0.142929292929293 3.86562383620664
0.141346801346801 3.87246373555294
0.13976430976431 3.87930610545474
0.138181818181818 3.88615097479388
0.136599326599327 3.89299837312453
0.135016835016835 3.89984833069688
0.133434343434343 3.90670087848181
0.131851851851852 3.91355604819688
0.13026936026936 3.92041387233347
0.128686868686869 3.92727438418533
0.127104377104377 3.93413761787845
0.125521885521886 3.94100360840254
0.123939393939394 3.94787239164406
0.122356902356902 3.9547440044209
0.120774410774411 3.96161848451897
0.119191919191919 3.96849587073066
0.117609427609428 3.97537620289535
0.116026936026936 3.98225952194218
0.114444444444444 3.9891458699351
0.112861952861953 3.99603529012048
0.111279461279461 4.00292782697743
0.10969696969697 4.00982352627092
0.108114478114478 4.01672243510808
0.106531986531987 4.02362460199781
0.104949494949495 4.0305300769139
0.103367003367003 4.03743891136207
0.101784511784512 4.04435115845111
0.10020202020202 4.05126687296847
0.0986195286195286 4.05818611146067
0.097037037037037 4.065108932319
0.0954545454545455 4.07203539587066
0.0938720538720539 4.07896556447626
0.0922895622895623 4.0858995026337
0.0907070707070707 4.09283727708941
0.0891245791245791 4.09977895695733
0.0875420875420875 4.10672461384646
0.085959595959596 4.11367432199767
0.0843771043771044 4.12062815843072
0.0827946127946128 4.12758620310235
0.0812121212121212 4.13454853907662
0.0796296296296296 4.14151525270859
0.0780471380471381 4.14848643384277
0.0764646464646465 4.15546217602775
0.0748821548821549 4.16244257674885
0.0732996632996633 4.16942773768049
0.0717171717171717 4.17641776496064
0.0701346801346801 4.18341276948965
0.0685521885521886 4.19041286725632
0.066969696969697 4.19741817969435
0.0653872053872054 4.20442883407274
0.0638047138047138 4.21144496392435
0.0622222222222222 4.21846670951738
0.0606397306397306 4.2254942183752
0.0590572390572391 4.23252764585097
0.0574747474747475 4.23956715576437
0.0558922558922559 4.24661292110914
0.0543097643097643 4.25366512484143
0.0527272727272727 4.26072396076096
0.0511447811447812 4.26778963449887
0.0495622895622896 4.27486236462899
0.047979797979798 4.2819423839224
0.0463973063973064 4.28902994076901
0.0448148148148148 4.29612530079503
0.0432323232323232 4.30322874871098
0.0416498316498317 4.310340590433
0.0400673400673401 4.31746115552969
0.0384848484848485 4.3245908000593
0.0369023569023569 4.33172990987834
0.0353198653198653 4.33887890452344
0.0337373737373738 4.34603824179612
0.0321548821548822 4.35320842321689
0.0305723905723906 4.36039000056437
0.028989898989899 4.36758358378333
0.0274074074074074 4.37478985063868
0.0258249158249158 4.3820095586249
0.0242424242424243 4.38924355982915
0.0226599326599327 4.3964928197232
0.0210774410774411 4.40375844127192
0.0194949494949495 4.4110416963794
0.0179124579124579 4.41834406769033
0.0163299663299664 4.42566730538703
0.0147474747474748 4.43301350636267
0.0131649831649832 4.44038522798704
0.0115824915824916 4.44778565767085
0.01 4.45521887723417
};
\addlegendentry{\scriptsize $4.504 - 4.194 \cdot h_{X}^{0.968}$}
\end{axis}

\end{tikzpicture}
\begin{tikzpicture}

\definecolor{darkgray176}{RGB}{176,176,176}
\definecolor{lightgray204}{RGB}{204,204,204}

\begin{axis}[
width=0.951\fwidth,
height=0.75\fwidth,
at={(0\fwidth,0\fwidth)},
legend cell align={left},
legend style={
  fill opacity=0.8,
  draw opacity=1,
  text opacity=1,
  at={(0.97,0.03)},
  anchor=south east,
  draw=lightgray204
},
log basis x={10},
log basis y={10},
tick align=outside,
tick pos=left,
x grid style={darkgray176},
xlabel={fill distance $h_X$},
xmin=0.008, xmax=0.15,
xmode=log,
xtick style={color=black},
y grid style={darkgray176},
ymin=0.3, ymax=1.5,
ymode=log,
ytick style={color=black}
]
\addplot [semithick, red, mark=x, mark size=3, mark options={solid}, only marks]
table {%
0.125 1.02507833318103
0.0909090909090909 1.03672135983779
0.0645161290322581 0.973966004740607
0.0476190476190476 0.804689961271504
0.0344827586206897 0.719574394987134
0.0253164556962025 0.604670228953936
0.0185185185185185 0.521627850735037
0.0136054421768707 0.443400282225249
0.01 0.379512740861311
};
\addlegendentry{ \scriptsize $\Vert s_{f, X_{n+1}} - s_{f, X_n} \Vert_{\ns}$}
\addplot [semithick, blue, dashed]
table {%
0.125 1.36598073132119
0.0909090909090909 1.16576498060659
0.0645161290322581 0.982842026601827
0.0476190476190476 0.844973251857921
0.0344827586206897 0.719574394987134
0.0253164556962025 0.616999062808801
0.0185185185185185 0.528078146043909
0.0136054421768707 0.452959169176053
0.01 0.388606409423971
};
\addlegendentry{\scriptsize $3.845 \cdot h_{X}^{0.498}$}
\end{axis}

\end{tikzpicture}
\caption{Exemplary visualization of the proposed algorithms.
In this example, 
the predictions for the RKHS norm $\Vert f \Vert_{\ns}$ are given by 4.504 (Algorithm 1) and 4.667 (Algorithm 2).
The numerical convergence rates are close to what is to be expected ($0.968$ vs $1.0$, $0.498$ vs $0.5$).
}
\label{fig:vis_alg}
\end{figure}
\section{Numerical experiments} \label{sec:experiments}

We now evaluate our methods with numerical experiments\footnote{Code will be made available upon acceptance}.
For concreteness, we use Matérn kernels in the following experiments, but we stress that the theory and our method are also applicable to other classes of kernels, such as the popular compactly supported Wendland kernels \cite{wendland2004scattered}.

\subsection{Test functions} \label{subsec:num_results}
Since our algorithms are meant for situations where the RKHS norm of a function is not known, it is important to evaluate whether the resulting RKHS norm estimates are reliable.
For this reason, we first apply our methods on synthetic test functions, including some for which the RKHS norm can be computed analytically.
We consider six functions on $[-1, 1]$ as well as three functions on $[0, 1]^2$, cf. Table \ref{tab:rkhs_norms}.
The univariate function $f_1$ is defined as $f(x)=\exp(x-1) (x-3) - \exp(-1-x) (x+3) + 4$,
and it is included in $T_k L_2(\Omega)$ if $k$ is the linear Matérn kernel, hence a faster convergence rate is expected.
In the multivariate setting, the function $f_2$ is given by $f(x_1,x_2)=\sin(\pi x_1) \sin(\pi x_2)$,
and $f_3$ is the well-known Franke test function.
We applied both Algorithm 1 and Algorithm 2 to these test functions,
using up to $513$ points (in 1D) respective $81^2$ points (in 2D).

Before turning to the RKHS norm predictions, we first inspect the fitted exponents $\beta_1$ and $\beta_2$, cf. \Cref{tab:fitted_coefficients}, since the predictions rely on these parameters.
Most of these are close to 1 (Algorithm 1) and 0.5 (Algorithm 2), respectively, as is expected based on the discussion in \Cref{rem:power_spaces}.
In some cases, the exponents in \Cref{tab:fitted_coefficients} are significantly higher (in \textbf{bold}), indicating superconvergence. 
For $f_1$, this behaviour is indeed expected from theory. 

Let us turn to the RKHS norm predictions by Algorithm 1 and Algorithm 2, which are reported in \Cref{tab:rkhs_norms} together with the actual RKHS norms for the univariate functions, which can be obtained analytically.
We observe that the predictions of Algorithm 1 and Algorithm 2 are always close together.
Note that the predictions from Algorithm 2 are always slightly higher than the predictions from Algorithm 1,
which is expected and reasonable since Algorithm 2 aims to estimate an \emph{upper bound} on the RKHS norm.
Furthermore, for the univariate functions for which the true RKHS norms are known, we observe that both Algorithm 1 and Algorithm 2 are rather accurate.
While the predictions of Algorithm 1 are in general slightly more accurate than those of Algorithm 2,
they sometimes slightly underestimate the true RKHS norm.
In contrast, Algorithm 2 always provides an upper bound on the true RKHS norm.

\setlength{\tabcolsep}{5pt}
\begin{table*}[t]
\centering
\resizebox{\textwidth}{!}{
 \begin{tabular}{||c | c c c | c c c | c c c ||} 
 \hline
 Function 		& \multicolumn{3}{c|}{Matérn 0} & \multicolumn{3}{c|}{Matérn 1} & \multicolumn{3}{c|}{Matérn 2}  \\ [0.5ex] \hline\hline
 				& Algorithm 1 	& Algorithm 2 & true	& Algorithm 1 	& Algorithm 2 & true	& Algorithm 1 	& Algorithm 2 & true 	\\ [0.5ex] \hline\hline
$|x|$ 			& 1.52753 		& 1.52757 	 & 1.52752 		& - 			& - 		 & $\infty$  & -            & -         & $\infty$		\\
$x^2$ 			& 1.59172 		& 1.59188    & 1.59164 		& 3.23019 		& 3.23043    & 3.23006 	 & 3.59227 		& 3.59253   & 3.59165 		\\
$\exp(-0.5x)$ 	& 1.73555 		& 1.73556 	 & 1.73553	    & 1.85329 		& 1.85332    & 1.85326	 & 1.39103 		& 1.39106   & 1.39097	    \\
$(x^2 - 1)^2$ 	& 1.27491 		& 1.27497    & 1.27491		& 2.79939 		& 2.80002    & 2.79682	 & 7.30864 		& 7.31385   & 7.30122		\\
$\sin(2 \pi x)$ & 4.49859 		& 4.49873    & 4.49880		& 21.18989 	    & 21.28833   & 21.19207	 & 64.73353 	& 65.03068  & 64.68946 	    \\
$f_1$ 		    & 2.25658 		& 2.25777    & 2.25659	    & 1.83122 		& 1.83122    & 1.83121   & 1.06552  	& 1.06552   & 1.06552	    \\ \hline

$f_2$ 			    & 2.30738		&  2.32439  & -	      & 8.96719 		& 9.14976   & -  	 & 22.27343 	 & 22.32800    & -\\
$x_1^2 + x_2^2 + 1$	& 3.54707 		& 3.56540	& -	      & 4.98415 		& 5.01081   & -		 & 4.95890 		 & 4.98154     & -\\
$f_3$               & 2.00531 		& 2.01181 	& -	      &  12.54689 		& 12.94997 	& -	     & 92.24587	     & 93.14059    & - \\ \hline
 \end{tabular}
 }
 \caption{Overview of the predicted RKHS norms $\Vert f \Vert_{\ns}$ for the two algorithms as well as some true RKHS norms, which could be calculated analytically for the univariate functions.}
 \label{tab:rkhs_norms}
\end{table*}
\setlength{\tabcolsep}{5pt}
\begin{table}[h!]
\centering
 \begin{tabular}{||c | c c | c c | c c ||} 
 \hline
 Function 		& \multicolumn{2}{c|}{Matérn 0} & \multicolumn{2}{c|}{Matérn 1} & \multicolumn{2}{c|}{Matérn 2}  \\ [0.5ex] \hline\hline
 				& Alg.\ 1 	& Alg.\ 2 	& Alg.\ 1 	& Alg.\ 2 	& Alg.\ 1 	& Alg.\ 2 	\\ [0.5ex] \hline\hline
$|x|$ 			& 0.999 		& 0.496 		& - 			& - 			& - 			& - 			\\
$x^2$ 			& 0.993 		& 0.489 		& 0.993 		& 0.490		    & 0.983 		& 0.485 		\\
$\exp(-0.5x)$ 	& 0.996 		& 0.496 		& 0.993 		& 0.494 		& 0.990 		& 0.492 		\\
$(x^2 - 1)^2$ 	& \textbf{2.129} 		& \textbf{1.064} 		& 0.912 		& 0.446 		& 0.957 		& 0.459 		\\
$\sin(2 \pi x)$ & 1.015 		& 0.503 		& 1.322 		& 0.660 		& 0.891 		& 0.441 	\\
$f_1$ 		    & 1.027 		& 0.513 		& \textbf{4.464} 		& \textbf{2.218} 		& \textbf{2.948} 		& \textbf{1.475} 		\\ \hline

$f_2$ 			& 0.897 			& 0.403 		& 0.858 			& 0.372 			& 1.484 			& 0.683 \\
$x_1^2 + x_2^2 + 1$		& 0.918			& 0.427 		& 0.936			& 0.433 			& 0.943 			& 0.439 \\
$f_3$					& \textbf{1.923}			& \textbf{0.866} 		& \textbf{2.504}			& \textbf{0.997} 			& \textbf{4.471} 			& \textbf{1.609} \\ \hline
\hline
 \end{tabular}
 \caption{Overview of the fitted exponents $\beta_1$ and $\beta_2$ for the two algorithms.
 The bold highlighted numbers are larger than expected -- for these functions, additional superconvergence is observed as elaborated in the main text.}
\label{tab:fitted_coefficients}
\end{table}

As discussed above, a major application of RKHS norm estimates are error bounds like \eqref{eq:errorBoundWithPowerFunction}.
While both $P_X(x)$ for all $x\in\Omega$ and $\Vert s_{f, X} \Vert_{\ns}$ can be explicitly computed, the RKHS norm $\|f\|_{\ns}$ in \eqref{eq:errorBoundWithPowerFunction} needs to be replaced by an estimated upper bound.
This is illustrated \Cref{fig:vis_power_func}, where two of the test functions and corresponding interpolants (using for illustrative purposes just four data points) are plotted together with the error bounds resulting from the estimates from Algorithm 2.
Note that the error bounds indeed hold in the whole domain, and we verified this additionally on a fine grid.
The second example in \Cref{fig:vis_power_func} also illustrates that these error bounds can be rather tight, which is to be expected from the worst-case optimality of \eqref{eq:errorBoundWithPowerFunction} and the accuracy of the predictions of our algorithms.
\begin{figure}[h!!]
\setlength\fwidth{.5\textwidth}
\input{Figures/vis_power_bound_9_v2.tex}
\hspace*{.27cm}\input{Figures/vis_power_bound_15_v2.tex}
\caption{
Two exemplary visualization of a target function (black), a kernel interpolant (red) as well as the error bounds due to Eq.\eqref{eq:errorBoundWithPowerFunction} (blue, dashed), where $C$ is our estimate for the respective RKHS norm.
}
\label{fig:vis_power_func}
\end{figure}

\subsection{Control example}
We now apply our approach to a concrete example from learning-based control.
We consider approximate model predictive control (AMPC) with guarantees via an input-robust MPC scheme as developed in \cite{hertneck2018learning,nubert2020safe,tokmak2025automatic}.
Consider a discrete-time system
\begin{equation}
    x_+ = f(x,u)
\end{equation}
with nonlinear transition function $f\colon \R^n\times\R^m\rightarrow\R^n$ and subject to constraints $(x,u)\in \mathbb{X}\times\mathbb{U}\subseteq \R^n\times\R^m$,
together with an MPC controller $\mu\colon \R^n\rightarrow\R^m$.
Applying $\mu$ involves solving an optimal control problem, specifically a nonlinear program, in each time step, which can pose problems with real-time feasibility.
In AMPC, $\mu$ is replaced by an explicit map $\hat\mu$ that is cheap and fast to evaluate, e.g., by representing $\hat\mu$ as a neural network or a kernel interpolator.
In order to maintain control-theoretic guarantees, in particular, constraint satisfaction and recursive feasibility, one can use for $\mu$ an input-robust MPC scheme.
Formally, for a given approximation level $\epsilon>0$ one can design $\mu$ and a tightened constraint set $\hat{\mathbb{X}}_\epsilon\subseteq\mathbb{X}$ such that if 
\begin{equation} \label{eq:uniformBoundRMPC}
    \|\mu(x)-\hat\mu(x)\|\leq \epsilon
    \quad
    \forall x \in \hat{\mathbb{X}}_\epsilon
\end{equation}
then $\hat\mu$ maintains the system-theoretic guarantees of $\mu$ on $\hat{\mathbb{X}}_\epsilon$; see for example the robust MPC formulation in~\cite{kohler2020computationally}.
While \cite{hertneck2018learning,nubert2020safe} proposed to use deep learning together with an iterative post-training verification procedure,
\cite{tokmak2025automatic} suggested to use kernel interpolation together with the classic error bound \eqref{eq:errorBoundWithPowerFunction} to ensure that \eqref{eq:uniformBoundRMPC} holds, which in turn requires an upper bound on the RKHS norm of $\mu$.

We will now use the method outlined in Section \ref{sec:rkhs_norm_estimation} for this task, demonstrating it for concreteness on the nonlinear robust MPC of the continuously stirred tank reactor (CSTR) example used in \cite{tokmak2025automatic}.
The corresponding feedback map $\mu$ on its domain of feasibility $\hat{\mathbb{X}}_\epsilon$ is shown in Figure \ref{fig:cstr_mpc_plot}, where we used $\epsilon=5.1\cdot 10^{-3}$ as in \cite{tokmak2025automatic}.
\begin{figure}[h!]
    \centering
    \includegraphics[width=\linewidth]{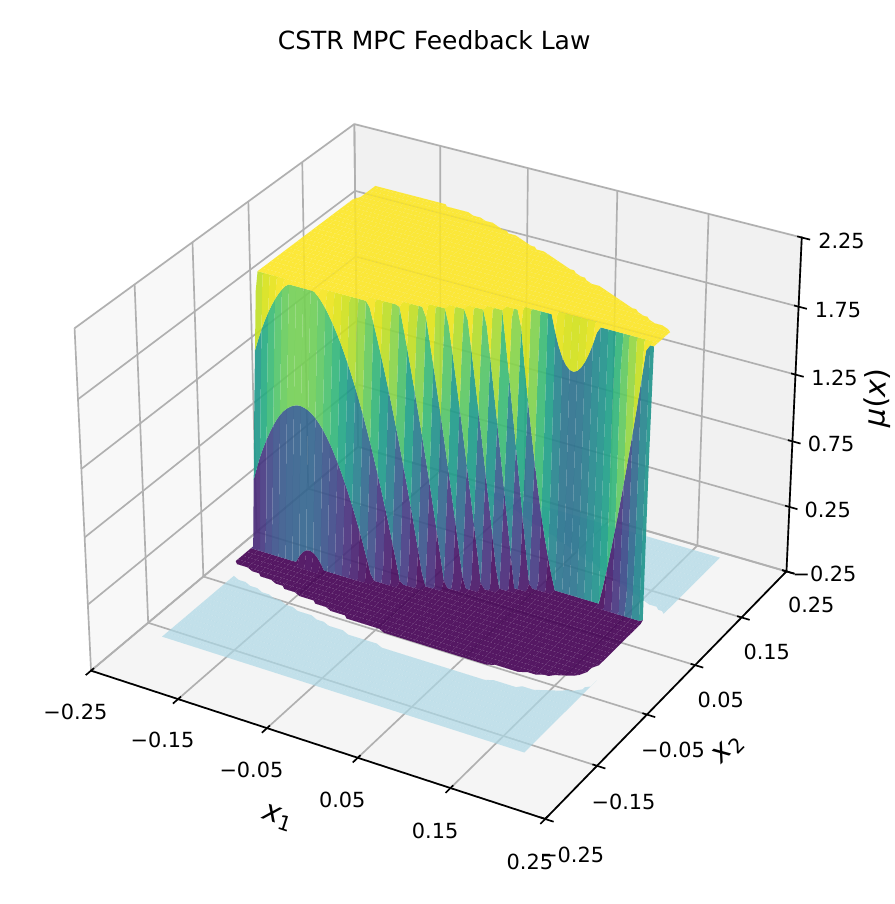}
    \caption{MPC map $\mu$ for the CSTR example. Note that $\mu$ is not defined on the light-blue area in the $x_1$-$x_2$-plane.}
    \label{fig:cstr_mpc_plot}
\end{figure}
Since the map appears to be not very smooth, we use the Matérn 0 kernel for this task with length scale 0.2 (due to the smaller domain).
Running Algorithm \ref{alg:alg1} results in an RKHS norm estimate of 45.9622. %
We evaluate the quality of this estimate by using it together with the classic error bound \eqref{eq:errorBoundWithPowerFunction}.
For this, we randomly sample 50 points from $\hat{\mathbb{X}}_\epsilon$ and fit a kernel interpolator $\hat{\mu}$, cf. \Cref{fig:cstr_error_check} (top).
We then check on a fine grid on the function domain whether the error bound $|\mu(x)-\hat{\mu}(x)|\leq C P_X(x)$ holds, where $C=45.9622$ is the RKHS norm estimate from above.
Using a grid with $100^2$ points, we did not find a single violation of the error bound, cf. \Cref{fig:cstr_error_check} (bottom), indicating that the RKHS norm estimate is indeed reliable and can be used in error bounds.
Note that for illustrative purposes, we estimated an RKHS norm for the full map $\mu$.
As the plot in Figure \ref{fig:cstr_mpc_plot} clearly shows, this function has regions with vastly different smoothness levels, and hence in practical applications, a localisation approach as proposed in \cite{tokmak2025automatic} is beneficial.
Our algorithms are directly applicable in such a setup by simply restricting them to local domains.
\begin{figure}[h!]
    \centering
    \includegraphics[width=\linewidth]{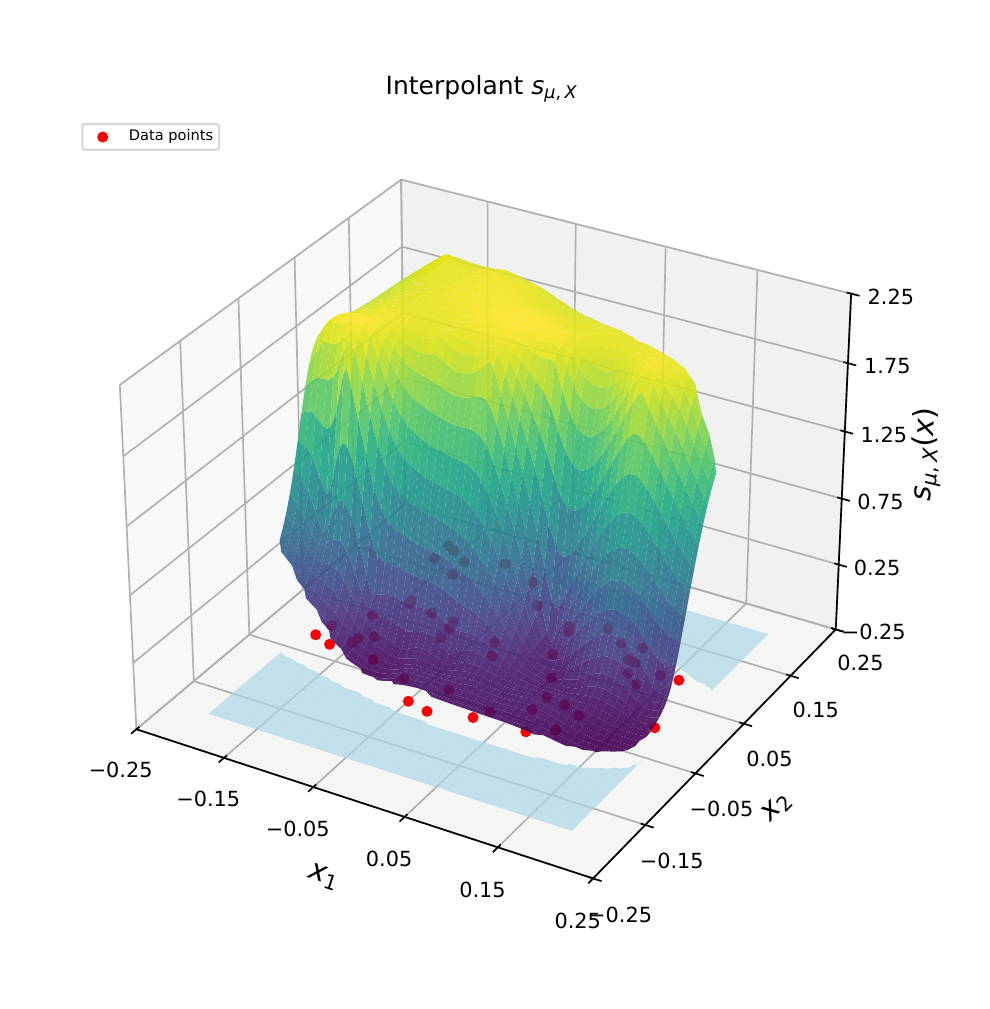}
    \includegraphics[width=0.8\linewidth]{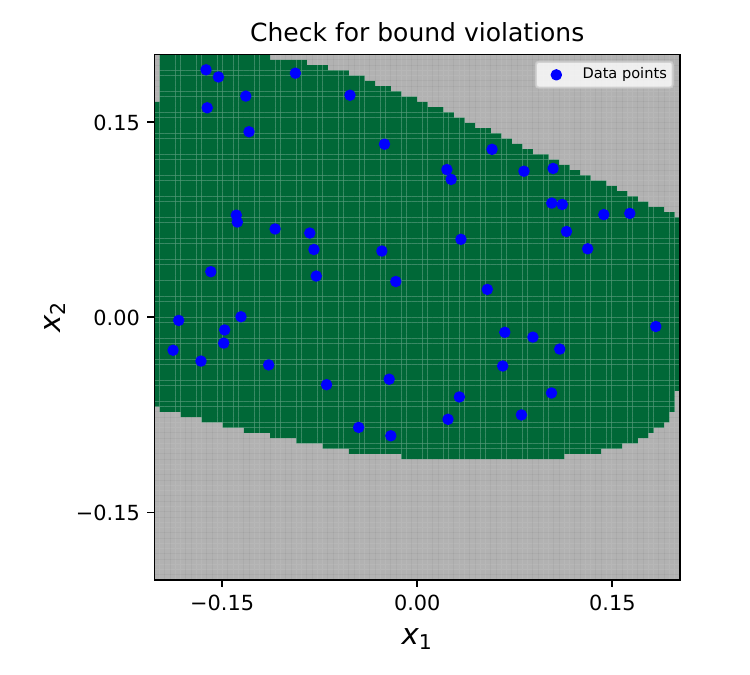}
    \caption{Kernel interpolator from 50 data points (top), verification of the error bound from the RKHS norm estimate on a fine grid (bottom).}
    \label{fig:cstr_error_check}
\end{figure}

\section{Conclusion and outlook} \label{sec:conclusion}
We considered the important problem of estimating RKHS norms of functions, which poses a major obstacle to the practical applicability of kernel methods with guarantees in learning-based control.
We tackled this challenge by proposing two sampling-based algorithms that compute from noise-free function evaluations an estimate of the RKHS norm of a target function and an upper bound thereof, respectively.
Our algorithms are based on sharp convergence results for kernel interpolators with finite smoothness.
To the best of our knowledge, our approach is the first method for RKHS norm estimation with a solid theoretical foundation.
Extensive numerical experiments and an application to a learning-based control application demonstrated the reliability and efficacy of our algorithms.

This work opens up many avenues for further research.
In this work, we assume safe and noise-free access to the target function, and extending our approach to noisy or restricted function evaluations is an interesting direction for future work.
Furthermore, since our methods require a dense grid on the input domain of the target function, which in turn suffers from the curse-of-dimensionality, our algorithms are so far limited to low-dimensional domains in practice.
Ongoing work is concerned with developing algorithms for higher-dimensional input domains based on the $P$-greedy approach \cite{santin2017convergence,wenzel2021novel}.
Finally, in this work we focused on finitely-smooth kernels due to their significance in practical applications and the availability of sharp convergence results.
Developing similar results for infinitely smooth kernels like the Gaussian kernel and adapting our algorithms to this setting is another very relevant direction for future work.

\section*{AI Statement}
Programming of the experiments was partially supported by Claude Code \cite{claudeCodeRef} and GitHub CoPilot \cite{githubCopilotRef}.
ChatGPT \cite{chatGPTref} was used to improve formulations and to the check the manuscript for typos and language problems.

\section*{Acknowledgment}
T.W.\ gratefully acknowledges funding by Daimler and Benz Foundation as part of the scholarship program for junior professors and postdoctoral researchers.
A.T.\ was supported by a Tandem Industry Academia Seed funding from the Finnish Research Impact foundation.
C.F.\ acknowledges funding from DFG Project FO 767/10-2 (eBer-24-32734) ”Implicit Bias in Adversarial Training”.
\begin{center}
    \raisebox{0pt}{\includegraphics[height=1.5cm]{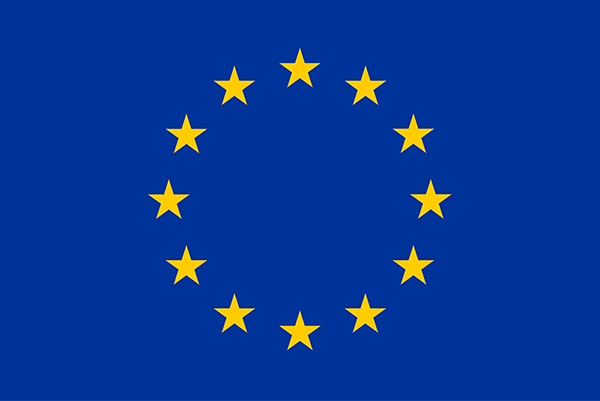}}%
    \hspace{1em}%
    \raisebox{0pt}{\includegraphics[height=1.5cm]{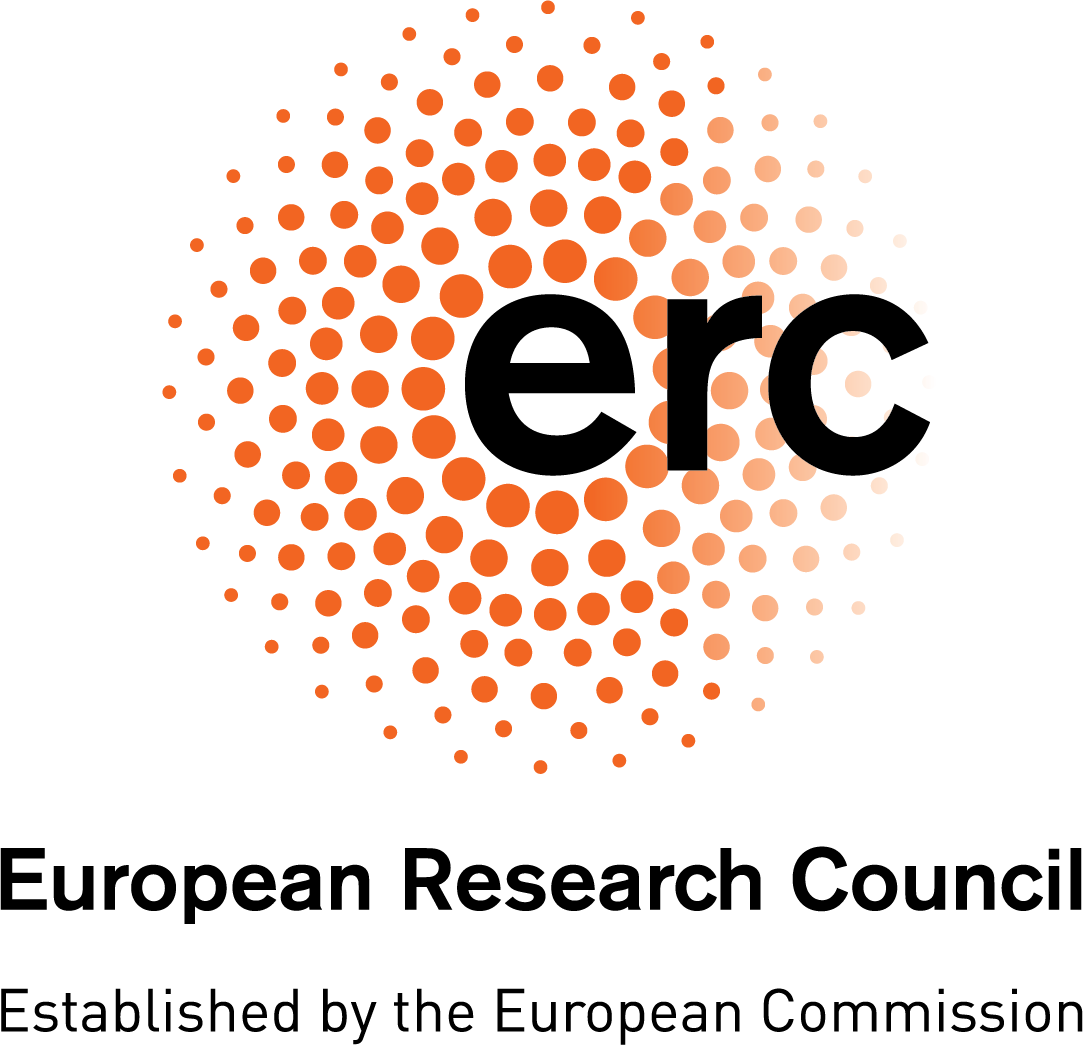}}%
\end{center}
Funded by the European Union. Views and opinions expressed are however those of the author(s) only and do not necessarily reflect those of the European Union or the European Research Council Executive Agency. Neither the European Union nor the granting authority can be held responsible for them. This project has received funding from the European Research Council (ERC) under the European Union’s Horizon Europe research and innovation programme (grant agreement No. 101198055, project acronym NEITALG).

\bibliography{../references}
\bibliographystyle{abbrv}

\end{document}